\providecommand{\U}[1]{\protect\rule{.1in}{.1in}}
\providecommand{\U}[1]{\protect\rule{.1in}{.1in}}
\providecommand{\U}[1]{\protect\rule{.1in}{.1in}}
\newtheorem{theorem}{Theorem}[section]
\newtheorem{corollary}[theorem]{Corollary}
\newtheorem{lemma}[theorem]{Lemma}
\theoremstyle{definition}
\newtheorem{example}[theorem]{Example}
\newtheorem{remark}[theorem]{Remark}
\newtheorem{definition}[theorem]{Definition}
\begin{document}
\title[On the size of certain subsets of invariant Banach sequence spaces]{%
On the size of certain subsets of invariant Banach sequence spaces}
\author[T.K. Nogueira]{Tony Nogueira}
\address{Departamento de Matem\'{a}tica, Universidade Federal da Para\'{\i}%
ba, 58.051-900 - Jo\~{a}o Pessoa, Brazil.}
\email{tonykleverson@gmail.com}
\author[D. Pellegrino]{Daniel Pellegrino}
\address{Departamento de Matem\'{a}tica, Universidade Federal da Para\'{\i}%
ba, 58.051-900 - Jo\~{a}o Pessoa, Brazil.}
\email{pellegrino@pq.cnpq.br and dmpellegrino@gmail.com}
\thanks{Keywords: invariant sequence spaces; spaceability}
\thanks{2010 Mathematics Subject Classification: 46A45; 15A03}
\thanks{The authors are supported by CNPq and Capes}

\begin{abstract}
The essence of the notion of lineability and spaceability is to find linear
structures in somewhat chaotic environments. The existing methods, in
general, use \textit{ad hoc} arguments and few general techniques are known.
Motivated by the search of general methods, in this paper we formally extend
recent results of G.\ Botelho and V.V. F\'{a}varo on invariant sequence
spaces to a more general setting. Our main results show that some subsets of
invariant sequence spaces contain, up to the null vector, a closed
infinite-dimensional subspace.
\end{abstract}

\maketitle

\section{Introduction and background}

The notion of invariant sequence spaces, as we investigate in this note, was
introduced in \cite{bdf} although it seems to have its roots in \cite{barr,
bc}. Our main results are formal extensions of recent results of G. Botelho
and V.V. F\'{a}varo \cite{favv}. We show, among other results, that some
special invariant sequence spaces used in \cite{favv} can be replaced by
more general invariant sequence spaces. Let us first recall the notion of
invariant sequence space.

\begin{definition}
\label{seqspa} (\cite{bdf}) Let $X\neq \{0\}$ be a Banach space.\newline
(a) Given $x\in X^{\mathbb{N}}$, $x^{0}$ is defined as: if $x$ has only
finitely many non-zero coordinates, then $x^{0}=0$; otherwise, $%
x^{0}=(x_{j})_{j=1}^{\infty }$ where $x_{j}$ is the $j$-th non-zero
coordinate of $x$.\newline
(b) An invariant sequence space over $X$ is an infinite-dimensional Banach
or quasi-Banach space $E$ of $X$-valued sequences enjoying the following
conditions:\newline
(b1) For $x\in X^{\mathbb{N}}$ such that $x^{0}\neq 0$, $x\in E$ if and only
if $x^{0}\in E$, and $\Vert x\Vert \leq K\Vert x^{0}\Vert $ for some
constant $K$ depending only on $E$.\newline
(b2) $\Vert x_{j}\Vert \leq \Vert x\Vert $ for every $x=(x_{j})_{j=1}^{%
\infty }\in E$ and every $j\in \mathbb{N}$. \newline
\end{definition}

\begin{example}
As mentioned in \cite{bdf}, usual sequence spaces are invariant sequence
spaces. For instance

(a) For every $0<p\leq \infty ,$ the spaces 
\begin{equation*}
\begin{aligned} \ell _{p}(X) &= \left\{ (x_{j})_{j=1}^{\infty }\in
X^{\mathbb{N}}:\Vert (x_{j})_{j=1}^{\infty }\Vert:=\left(
\displaystyle\sum_{j=1}^{\infty }\Vert x_{j}\Vert ^{p}\right) ^{\frac{1}{p}
}<\infty \right\} , \\ \ell _{p}^{w}(X)& =\left\{ (x_{j})_{j=1}^{\infty }\in
X^{\mathbb{N}}:\Vert (x_{j})_{j=1}^{\infty
}\Vert_{w,p}:=\displaystyle\sup_{\|\varphi\|\leq
1}\left(\displaystyle\sum_{j=1}^{\infty } \left\vert \varphi
(x_{j})\right\vert^{p}\right) ^{\frac{1}{p}}<\infty ,\varphi \in X^{\prime
}\right\} , \\ \ell _{p}^{u}(X) & =\left\{ (x_{j})_{j=1}^{\infty }\in \ell
_{p}^{w}(X):\displaystyle\lim_{n\rightarrow \infty }\Vert
(x_{j})_{j=n}^{\infty }\Vert_{w,p}=0\right\} , \\ c_{0}(X)& =\left\{
(x_{j})_{j=1}^{\infty }\in X^{\mathbb{N}}:\lim_{j\rightarrow \infty
}x_{j}=0\right\}, \\ c(X) & =\left\{ (x_{j})_{j=1}^{\infty }\in
X^{\mathbb{N}}:\lim_{j\rightarrow \infty }x_{j}\mbox{ exists}\right\}
\end{aligned}
\end{equation*}%
are invariant sequence spaces over $X$. Above and henceforth, $X^{\prime }$
denotes the topological dual of $X,$ and in $c_{0}(X)$ and $c(X)$ we
consider the $\sup $ norm. When $p=\infty ,$ the sums are replaced by a
supremum and $\ell _{\infty }^{w}(X):=\ell _{\infty }(X).$\newline
(b) For $0<p,q<\infty $, the Lorentz space $\ell _{p,q}$ is an invariant
sequence space (over $\mathbb{K}$). For more details and examples we refer
to \cite{bdf}.
\end{example}


The spirit of the concept of lineability and spaceability is to look for
linear structures in nonlinear settings. There are few general methods (see,
for instance, \cite{b2, bbb}) to prove lineability and spaceability and, in
general, particular problems need \textit{ad hoc} arguments. For more
details on the subject we refer to \cite{aron, vv, bernal} and the
references therein.

The following definition is a natural extension of \cite[Definition 2.2]%
{favv}:

\begin{definition}
Let $X$ and $Y$ be Banach spaces, $\Gamma $ be an arbitrary set and $E$ be
an invariant sequence space over $X.$ If $E_{l},$ for all $l\in \Gamma ,$
are invariant sequence spaces over $Y$ and $f:X\longrightarrow Y$ is any
map, we define the set%
\begin{equation*}
G(E,f,(E_{l})_{l\in \Gamma })=\left\{ (x_{j})_{j=1}^{\infty }\in
E:(f(x_{j}))_{j=1}^{\infty }\notin \bigcup_{l\in \Gamma }E_{l}\right\} .
\end{equation*}
\end{definition}

According to \cite[Definition 2.3]{favv} a map $f\colon X\longrightarrow Y$
between normed spaces is said to be: \newline
(a) \textit{Non-contractive} if $f(0)=0$ and for every scalar $\alpha \neq 0$
there is a constant $K(\alpha )>0$ such that 
\begin{equation}
\Vert f(\alpha x)\Vert _{Y}\geq K(\alpha )\cdot \Vert f(x)\Vert _{Y}
\label{yjj}
\end{equation}%
for every $x\in X$. \newline
(b) \textit{Strongly non-contractive} if $f(0)=0$ and for every scalar $%
\alpha \neq 0$ there is a constant $K(\alpha )>0$ such that 
\begin{equation}
|\varphi (f(\alpha x))|\geq K(\alpha )\cdot |\varphi (f(x))|  \label{dd44}
\end{equation}%
for all $x\in X$ and $\varphi \in Y^{\prime }$. \newline
The following result was recently proved by Botelho and F\'{a}varo (see \cite%
[Theorem 2.5]{favv}):

\begin{theorem}
\label{u8} \textrm{\textrm{(\cite{favv})}} Let $X$ and $Y$ be Banach spaces, 
$E$ be an invariant sequence space over $X$, $f\colon X\longrightarrow Y$ be
a function and $\Gamma \subseteq (0,\infty ]$. \newline
(a) If $f$ is non-contractive, then 
\begin{equation*}
\begin{aligned} C(E,f,\Gamma )& =\left\{ (x_{j})_{j=1}^{\infty }\in
E:(f(x_{j}))_{j=1}^{\infty }\notin \textstyle\bigcup\limits_{q\in \Gamma
}\ell _{q}(Y)\right\} , \\ C(E,f,0) & =\left\{ (x_{j})_{j=1}^{\infty }\in
E:(f(x_{j}))_{j=1}^{\infty }\notin c_{0}(Y)\right\} \end{aligned}
\end{equation*}%
are either empty or spaceable in $E$ (i.e., the union of the set with $\{0\}$
contains a closed infinite-dimensional subspace of $E$). \newline
(b) If $f$ is strongly non-contractive, then \newline
\begin{equation*}
C^{w}(E,f,\Gamma )=\left\{ (x_{j})_{j=1}^{\infty }\in
E:(f(x_{j}))_{j=1}^{\infty }\notin \textstyle\bigcup\limits_{q\in \Gamma
}\ell _{q}^{w}(Y)\right\}
\end{equation*}%
is either empty or spaceable in $E$.
\end{theorem}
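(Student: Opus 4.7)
My plan is the standard disjoint-support spaceability template. Assume the relevant set is non-empty and fix a witness $x=(x_j)\in E$ with $(f(x_j))\notin\bigcup_{q\in\Gamma}\ell_q(Y)$ (resp.\ $\notin c_0(Y)$, $\notin\bigcup_{q\in\Gamma}\ell_q^w(Y)$). Since any eventually-zero sequence lies in every $\ell_q$-type space and in $c_0(Y)$, the witness must have infinitely many non-zero coordinates, so $x^0\in E$ by (b1). Split $\mathbb{N}$ into pairwise disjoint infinite subsets $N_1,N_2,\dots$ with bijections $\sigma_k\colon\mathbb{N}\to N_k$, and define $y^{(k)}\in X^{\mathbb{N}}$ by $y^{(k)}_{\sigma_k(i)}=x^0_i$ for all $i$ and $y^{(k)}_j=0$ for $j\notin N_k$. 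Then $(y^{(k)})^0=x^0$, so (b1) yields $y^{(k)}\in E$ with $\|y^{(k)}\|\le K\|x^0\|$. Let $V$ be the closed linear span in $E$ of $\{y^{(k)}:k\in\mathbb{N}\}$; because the $y^{(k)}$ have pairwise disjoint, nontrivial supports they are linearly independent, and so $V$ is an infinite-dimensional closed subspace of $E$.

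The heart of the proof is to show that every non-zero $z\in V$ lies in the set under consideration. Write $z$ as a norm-limit of finite combinations $z^{(n)}=\sum_k c_k^{(n)} y^{(k)}$. Property (b2) says the coordinate evaluations $E\to X$, $w\mapsto w_j$, are continuous linear maps of norm $\le 1$, so $z^{(n)}\to z$ forces coordinatewise convergence. Fix $k$ and any $i_0$ with $x^0_{i_0}\neq 0$ (all entries of $x^0$ are non-zero by construction); since the supports are disjoint, $z^{(n)}_{\sigma_k(i_0)}=c_k^{(n)}\,x^0_{i_0}\to z_{\sigma_k(i_0)}$, which forces $c_k^{(n)}$ to converge to a scalar $c_k$, and passing to the limit in every coordinate gives $z_j=c_k\,y^{(k)}_j$ for all $j\in N_k$. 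Since $z\neq 0$, at least one $c_{k_0}$ is non-zero.

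Finally, the (strongly) non-contractive hypothesis transfers the bad behaviour of $(f(x_j))$ to $(f(z_j))$ restricted to the single block $N_{k_0}$. For (a), for every $j\in N_{k_0}$ and every $q\in\Gamma$,
\begin{equation*}
\|f(z_j)\|_Y=\|f(c_{k_0}\,y^{(k_0)}_j)\|_Y\ge K(c_{k_0})\,\|f(y^{(k_0)}_j)\|_Y,
\end{equation*}
so summing over $j\in N_{k_0}$ yields $\sum_j\|f(z_j)\|_Y^q\ge K(c_{k_0})^q\sum_i\|f(x^0_i)\|_Y^q=\infty$; the $c_0(Y)$ variant follows because $(\|f(x^0_i)\|_Y)_i$ cannot tend to zero (else $(f(x_j))$ would be in $c_0(Y)$). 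For (b), apply the strongly non-contractive inequality to each $\varphi\in Y'$ with $\|\varphi\|\le 1$, sum over $j\in N_{k_0}$, and then take the supremum over $\varphi$ to conclude $(f(z_j))\notin\ell_q^w(Y)$ for every $q\in\Gamma$. The most delicate point in the whole argument is the identification of the block coefficients $c_k$ inside the closure $V$: the $y^{(k)}$ are a priori only a linearly independent sequence, not known to form a Schauder basis, and it is precisely the continuity of the coordinate projections guaranteed by (b2) that allows us to extract them.
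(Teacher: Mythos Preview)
Your proof is correct and follows essentially the same strategy as the paper's (the paper actually proves the more general Theorems~\ref{999} and~\ref{9993333} and recovers this statement as a corollary, but the argument is the same: disjoint-support copies of $x^0$, coordinatewise identification of block coefficients in the closure via (b2), and transfer of the failure via (strong) non-contractiveness). One small slip: for $(y^{(k)})^0=x^0$ to hold you need the bijections $\sigma_k\colon\mathbb{N}\to N_k$ to be \emph{increasing}, since the operation $(\cdot)^0$ reads off non-zero coordinates in their natural order; the paper writes $\mathbb{N}_i=\{i_1<i_2<\dots\}$ precisely for this reason.
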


In the present paper we formally extend Theorem 1.4 to a more general
setting.

\section{Spaceability and strongly invariant sequence spaces}

From now on an invariant sequence space $E$ over a Banach space $X$ will be
called strongly invariant sequence space when:

(a) $c_{00}$ $\left( X\right) \subset E;$

(b) $(x_{j})_{j=1}^{\infty }\in E$ if, and only if, all subsequences of $%
\left( x_{j}\right) _{j=1}^{\infty }$ also belong to $E.$



\begin{example}
\label{8t}If $X$ is a Banach space, then $\ell _{q}(X),\ell _{q}^{u}(X),\ell
_{q}^{w}(X),c(X),c_{0}(X)$ are strongly invariant sequence spaces.
\end{example}

The notion of strongly invariant sequence space is quite natural, but the
following example shows that there exist invariant sequence spaces which are
not strongly invariant sequence spaces:

\begin{example}
The Banach space $E=\left\{ \left( x_{j}\right)_{j=1}^{\infty} \in \ell
_{\infty }: x_{2n-1}= x_{2n} \text{ for all positive integers }n\right\} ,$
with the supremum norm, is an invariant sequence space but is not an
strongly invariant sequence space.
\end{example}

The following definition shall be used in the statement of our first main
result and in Section 3.

\begin{definition}
\label{compseqspa} \bigskip Let $X,Y$ be Banach spaces and $E$ be an
invariant sequence space over $Y$. A map $f:X\rightarrow $ $Y$ such that $%
f(0)=0$ is said to be compatible with $E$ if for any sequence $\left(
x_{j}\right) _{j=1}^{\infty }$ of \ elements of $X$, we have 
\begin{equation*}
\left( f(x_{j}\right) )_{j=1}^{\infty }\notin E\Rightarrow \left(
f(ax_{j}\right) )_{j=1}^{\infty }\notin E
\end{equation*}%
regardless of the scalar $a\neq 0.$
\end{definition}

\begin{example}
\label{12}Any non-contractive mapping $f:X\rightarrow Y$ is compatible with $%
\ell _{q}(Y)$ and $c_{0}(Y)$.
\end{example}

Now we state and prove one of the the main results of this paper. We show
that \cite[Theorem 2.5 (a)]{favv} can be formally generalized to a more
general setting. The proof is an abstraction of the proof of Theorem 1.4(a):

\begin{theorem}
\label{999}Let $X$ and $Y$ be Banach spaces, $\Gamma $ be an arbitrary set, $%
E$ be an invariant sequence space over $X$ and $E_{l}$ be strongly invariant
sequence spaces over $Y$ for all $l$ in $\Gamma $. If $f\colon
X\longrightarrow Y$ is compatible with $E_{l}$ for all $l\in \Gamma $, then $%
G(E,f,(E_{l})_{l\in \Gamma })$ is either empty or spaceable.
\end{theorem}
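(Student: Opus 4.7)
The plan is to adapt the construction used in the proof of Theorem~\ref{u8}(a) to this more abstract setting, with the role of $\ell_q(Y)$ played by an arbitrary strongly invariant $E_l$. Assume $G(E,f,(E_l)_{l\in\Gamma})$ is non-empty and fix $x=(x_j)_{j=1}^{\infty}$ in it. Because every $E_l$ contains $c_{00}(Y)$ (strong invariance (a)) and $f(0)=0$, the sequence $(f(x_j))_{j=1}^{\infty}$ cannot have only finitely many non-zero terms; in particular $x^{0}\neq 0$ and $\|x\|\leq K\|x^{0}\|$.

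Next I would partition $\mathbb{N}=\bigsqcup_{k=1}^{\infty}N_k$ into infinitely many infinite pairwise disjoint sets, write $N_k=\{n_{k,1}<n_{k,2}<\cdots\}$, and define $y^{(k)}\in X^{\mathbb{N}}$ by $y^{(k)}_{n_{k,j}}=x_j$ and $y^{(k)}_{m}=0$ for $m\notin N_k$. Since $(y^{(k)})^{0}=x^{0}$, condition (b1) of Definition~\ref{seqspa} gives $y^{(k)}\in E$ with $\|y^{(k)}\|\leq K\|x^{0}\|$. The vectors $y^{(k)}$ have pairwise disjoint, non-empty supports, hence they are linearly independent, so
\[
V:=\overline{\operatorname{span}}\{y^{(k)}:k\in\mathbb{N}\}\subset E
\]
is a closed infinite-dimensional subspace. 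The whole task is to show $V\setminus\{0\}\subset G(E,f,(E_l)_{l\in\Gamma})$.

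The key step, and the one I expect to be the main technical obstacle, is to identify the ``shape'' of an arbitrary $z\in V$. I would approximate $z$ by finite sums $z^{(m)}=\sum_{k}a_{k}^{(m)}y^{(k)}$ with $z^{(m)}\to z$ in $E$. Condition (b2) yields $\|z^{(m)}_{j}-z_{j}\|\leq\|z^{(m)}-z\|\to 0$ for every coordinate $j$. Fixing $k$ and choosing an index $i$ with $x_i\neq 0$, we have $z^{(m)}_{n_{k,i}}=a_{k}^{(m)}x_i$, so $a_{k}^{(m)}$ converges to some scalar $a_k$ and then $z_{n_{k,j}}=a_k x_j$ for every $j$. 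Thus on each block $N_k$ the vector $z$ looks like a scalar multiple of $x$.

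Finally, suppose $z\in V\setminus\{0\}$. Some $a_{k_0}\neq 0$, and since $f(0)=0$ the restriction of $(f(z_j))_{j=1}^{\infty}$ to $N_{k_0}$ is exactly the sequence $(f(a_{k_0}x_j))_{j=1}^{\infty}$. Because $(f(x_j))_{j=1}^{\infty}\notin E_l$ for every $l\in\Gamma$ and $f$ is compatible with each $E_l$, we get $(f(a_{k_0}x_j))_{j=1}^{\infty}\notin E_l$ for every $l$. The strong invariance property (b) of $E_l$ (stability under passage to and from subsequences) then forces $(f(z_j))_{j=1}^{\infty}\notin E_l$ for every $l\in\Gamma$, so $z\in G(E,f,(E_l)_{l\in\Gamma})$. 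This proves $V\setminus\{0\}\subset G(E,f,(E_l)_{l\in\Gamma})$ and hence the spaceability claim.
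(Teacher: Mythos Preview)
Your argument is correct and follows essentially the same route as the paper: spread disjoint-support copies of the given sequence over a partition of $\mathbb{N}$, pass to the closed span, use condition (b2) to get coordinatewise convergence and identify each block of a limit point as a scalar multiple of the original sequence, then invoke compatibility together with the subsequence stability of each $E_l$. The only (harmless) difference is that you place the full sequence $x$ on each block rather than $x^{0}$, which slightly streamlines things since you never need to check separately that $x^{0}\in G(E,f,(E_l)_{l\in\Gamma})$.
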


\begin{proof}
Assume that $G(E,f,(E_{l})_{l\in \Gamma })$ is non-void and consider $%
x=(x_{j})_{j=1}^{\infty }\in G(E,f,(E_{l})_{l\in \Gamma }).$ Note that there
are infinitely many indexes $j$ such that $x_{j}\neq 0$ because $f(0)=0$ and 
$c_{00}(Y)\subset E_{l}$ for all $l.$ We thus conclude that $x^{0}\neq 0$.
We shall first show that 
\begin{equation*}
x^{0}\in G(E,f,(E_{l})_{l\in \Gamma }).\newline
\end{equation*}%
Let $U:=\bigcup_{l\in \Gamma }E_{l}$. We know that $(f(x_{j}))_{j=1}^{\infty
}\notin U$, and thus $[(f(x_{j}))_{j=1}^{\infty }]^{0}\notin U$, since $%
E_{l} $, for each $l$, is an invariant sequence space. Denote $%
x^{0}=(x_{j_{k}})_{k=1}^{\infty },$ where $x_{j_{k}}$ is the $k$-th non null
coordinate of $x$. Then, we shall show that $(f(x_{j_{k}}))_{k=1}^{\infty
}\notin U$. Since $f(0)=0,$ it follows that 
\begin{equation*}
\lbrack (f(x_{j_{k}}))_{k=1}^{\infty }]^{0}=[(f(x_{j}))_{j=1}^{\infty
}]^{0}\notin U.
\end{equation*}%
Hence, $(f(x_{j_{k}}))_{k=1}^{\infty }\notin U$ and thus $x^{0}\in
G(E,f,(E_{l})_{l\in \Gamma })$. \ As usual, let us split $\mathbb{N}$ as a
countable union of pairwise disjoint subsets $(\mathbb{N}_{i})_{i=1}^{\infty
}$ of $\mathbb{N}$ and we denote $\mathbb{N}_{i}=\{i_{1}<i_{2}<...\}.$
Consider%
\begin{equation*}
y_{i}=\sum_{k=1}^{\infty }x_{j_{k}}\otimes e_{i_{k}}\in X^{\mathbb{N}}.
\end{equation*}%
Observe that $y_{i}^{0}=x^{0};$ we thus have $0\neq y_{i}^{0}\in E$ for all $%
i$. Since $E$ is an invariant sequence space, it follows that $y_{i}\in E$
for all $i\in \mathbb{N}$. Note also that the set $\{y_{1},y_{2},...\}$ is
linearly independent. Moreover, $y_{i}\in G(E,f,(E_{l})_{l\in \Gamma })$. \
In fact, if $y_{i}=(y_{m}^{i})_{m=1}^{\infty },$ then 
\begin{equation*}
\lbrack (f(y_{m}^{i}))_{m=1}^{\infty }]^{0}=[(f(x_{j}))_{j=1}^{\infty
}]^{0}\notin E_{l}
\end{equation*}%
for each $i\in \mathbb{N}$ and $l\in \Gamma $. Let $K$ be the constant from
Definition \ref{seqspa} (b1) and consider $\tilde{s}=1$ if $E$ is a Banach
space and $\tilde{s}=s$ if $E$ is an $s$-Banach space, $0<s<1$. For $%
(a_{i})_{i=1}^{\infty }\in \ell _{\tilde{s}}$,

\begin{align*}
\sum_{i=1}^{\infty }\Vert a_{i}y_{i}\Vert _{E}^{\tilde{s}}&
=\sum_{i=1}^{\infty }|a_{i}|^{\tilde{s}}\cdot \Vert y_{i}\Vert _{E}^{\tilde{s%
}}\leq K^{\tilde{s}}\cdot \sum_{i=1}^{\infty }\left\vert a_{i}\right\vert ^{%
\tilde{s}}\cdot \left\Vert y_{i}^{0}\right\Vert _{E}^{\tilde{s}} \\
& =K^{\tilde{s}}\cdot \left\Vert x^{0}\right\Vert _{E}^{\tilde{s}}\cdot
\sum_{i=1}^{\infty }\left\vert a_{i}\right\vert ^{\tilde{s}}=K^{\tilde{s}%
}\cdot \left\Vert x^{0}\right\Vert _{E}^{\tilde{s}}\cdot \left\Vert
(a_{i})_{i=1}^{\infty }\right\Vert _{\tilde{s}}^{\tilde{s}}<\infty .
\end{align*}%
Then $\sum\limits_{i=1}^{\infty }\Vert a_{i}y_{i}\Vert^{\tilde{s}}
_{E}<\infty $ and in any case we conclude that $\sum\limits_{i=1}^{\infty
}a_{i}y_{i}$ converges in $E.$ Thus the operator%
\begin{equation*}
T\colon \ell _{\tilde{s}}\longrightarrow E~~,~~T\left( \left( a_{i}\right)
_{i=1}^{\infty }\right) =\sum\limits_{i=1}^{\infty }a_{i}y_{i},
\end{equation*}%
is well-defined, linear and injective. Let us show that $\overline{T\left(
\ell _{\tilde{s}}\right) }$ belongs to $G(E,f,(E_{l})_{l\in \Gamma })$.

Let us recall that $y_{i}=\sum_{k=1}^{\infty }x_{j_{k}}\otimes e_{i_{k}}\in
X^{\mathbb{N}}$ where $x_{j_{k}}$ is the $k$-th non-zero coordinate $%
x=(x_{j})_{j=1}^{\infty }\in G(E,f,(E_{l})_{l\in \Gamma })$. We shall show
that if $z=(z_{n})_{n=1}^{\infty }\in $ $\overline{T\left( \ell _{\tilde{s}%
}\right) }$ is a non null sequence then $\left( f(z_{n}\right)
)_{j=1}^{\infty }\notin \bigcup_{l\in \Gamma }E_{l}.$ There are sequences $%
\left( a_{i}^{(k)}\right) _{i=1}^{\infty }\in \ell _{\tilde{s}}$, $k\in 
\mathbb{N}$, such that $z=\lim_{k\rightarrow \infty }T\left( \left(
a_{i}^{(k)}\right) _{i=1}^{\infty }\right) $ in $E.$ Note that, for each $%
k\in \mathbb{N}$,%
\begin{equation*}
T\left( \left( a_{i}^{(k)}\right) _{i=1}^{\infty }\right)
=\sum\limits_{i=1}^{\infty }a_{i}^{(k)}y_{i}=\sum\limits_{i=1}^{\infty
}a_{i}^{(k)}\cdot \sum\limits_{p=1}^{\infty }x_{j_{p}}\otimes
e_{i_{p}}=\sum\limits_{i=1}^{\infty }\sum\limits_{p=1}^{\infty
}a_{i}^{(k)}x_{j_{p}}\otimes e_{i_{p}}.
\end{equation*}%
Since $z\neq 0$, let $r\in \mathbb{N}$ be such that $z_{r}\neq 0.$ Since $%
\mathbb{N}=\bigcup\limits_{j=1}^{\infty }\mathbb{N}_{j}$, there exist unique 
$m,t\in \mathbb{N}$ such that $e_{m_{t}}=e_{r}$. Thus, for each $k\in 
\mathbb{N}$, the $r$-th coordinate of $T\left( \left( a_{i}^{(k)}\right)
_{i=1}^{\infty }\right) $ is the vector $a_{m}^{(k)}x_{j_{t}}.$ The
condition \ref{seqspa}(b2) of Definition 1.1 assures that convergence in $E$
implies coordinatewise convergence, so 
\begin{equation*}
z_{r}=\lim_{k\rightarrow \infty }a_{m}^{(k)}x_{j_{t}}=\left(
\lim_{k\rightarrow \infty }a_{m}^{(k)}\right) x_{j_{t}}.
\end{equation*}%
It follows that $\alpha _{m}:=\lim\limits_{k\rightarrow \infty
}a_{m}^{(k)}\neq 0$. On the one hand we have 
\begin{equation*}
\alpha _{m}x_{j_{p}}=\left( \lim_{k\rightarrow \infty }a_{m}^{(k)}\right)
x_{j_{p}}=\lim_{k\rightarrow \infty }a_{m}^{(k)}x_{j_{p}}
\end{equation*}%
for every $p\in \mathbb{N}$. On the other hand, for $p,k\in \mathbb{N}$, the 
$m_{p}$-th coordinate of $T\left( \left( a_{i}^{(k)}\right) _{i=1}^{\infty
}\right) $ is $a_{m}^{(k)}x_{j_{p}}.$ So, coordinatewise convergence gives $%
\lim\limits_{k\rightarrow \infty }a_{m}^{(k)}x_{j_{p}}=z_{m_{p}}$. It
follows that $z_{m_{p}}=\alpha _{m}x_{j_{p}}$ for every $p\in \mathbb{N}$.
As $\left( f(z_{m_{p}})\right) _{p=1}^{\infty }=\left( f\left(
a_{m}x_{j_{p}}\right) \right) _{p=1}^{\infty }$ and $\left( f\left(
x_{j_{p}}\right) \right) _{p=1}^{\infty }\notin E_{l}$, for all $l\in \Gamma 
$, by Definition \ref{compseqspa}, it follows that $\left(
f(z_{m_{p}})\right) _{p=1}^{\infty }\notin E_{l}$, for all $l\in \Gamma .$
Since $\left( f(z_{m_{p}})\right) _{p=1}^{\infty }$ is a subsequence of $%
\left( f(z_{n})\right) _{n=1}^{\infty }$ and $E_{l},$ for each $l\in \Gamma
, $ is a strongly invariant sequence space, it follows that 
\begin{equation*}
\left( f(z_{j}\right) )_{j=1}^{\infty }\notin E_{l},
\end{equation*}%
for all $l\in \Gamma ,$ and it completes the proof that $z\in
G(E,f,(E_{l})_{l\in \Gamma })$.
\end{proof}

From the previous theorem and Examples \ref{8t} and \ref{12} we have the
following corollary that recovers (\cite[Theorem 2.5 (a)]{favv}):

\begin{corollary}
\bigskip \textrm{\textrm{(\cite{favv})}} Let $X$ and $Y$ be Banach spaces, $%
E $ be an invariant sequence space over $X$, $f\colon X\longrightarrow Y$ be
a function and $\Gamma \subseteq (0,\infty ]$. If $f$ is non-contractive,
then $C(E,f,\Gamma )$ and $C(E,f,0)$ are either empty or spaceable.
\end{corollary}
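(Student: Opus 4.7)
The plan is simply to recognize the corollary as the instantiation of Theorem \ref{999} in two specific cases, so no independent argument is needed beyond matching the definitions. First I would rewrite the two sets in the language of the general construction: setting $E_{q}:=\ell_{q}(Y)$ for each $q\in\Gamma$, one has
\begin{equation*}
C(E,f,\Gamma)=G(E,f,(\ell_{q}(Y))_{q\in\Gamma}),
\end{equation*}
and taking the singleton family with $E_{0}:=c_{0}(Y)$ yields
\begin{equation*}
C(E,f,0)=G(E,f,(c_{0}(Y))).
\end{equation*}
This reduces the corollary to a hypothesis-checking exercise on these particular families of target spaces.

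Next I would verify that the ambient hypotheses of Theorem \ref{999} are met. By Example \ref{8t}, the spaces $\ell_{q}(Y)$ (for every $q\in(0,\infty]$) and $c_{0}(Y)$ are strongly invariant sequence spaces over $Y$, which handles the structural condition on the family $(E_{l})_{l\in\Gamma}$. By Example \ref{12}, any non-contractive map $f\colon X\longrightarrow Y$ is compatible (in the sense of Definition \ref{compseqspa}) with each $\ell_{q}(Y)$ and with $c_{0}(Y)$; so the compatibility condition on $f$ is automatic from the non-contractivity assumption.

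Having matched all the hypotheses, I would invoke Theorem \ref{999} with $(E_{l})_{l\in\Gamma}=(\ell_{q}(Y))_{q\in\Gamma}$ to conclude that $G(E,f,(\ell_{q}(Y))_{q\in\Gamma})=C(E,f,\Gamma)$ is either empty or spaceable, and then separately with the singleton family $\{c_{0}(Y)\}$ to conclude the same for $C(E,f,0)$. Since the whole work is performed by Theorem \ref{999}, there is no real obstacle; the only point that requires a brief sanity check is that the definition of $C(E,f,\Gamma)$ in Theorem \ref{u8} (as the complement of a union) matches the definition of $G$ verbatim under the identifications above, which is immediate from the definitions.
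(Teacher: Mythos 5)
Your proposal is correct and is exactly the paper's own argument: the authors derive this corollary by applying Theorem \ref{999} to the families $(\ell_{q}(Y))_{q\in\Gamma}$ and $\{c_{0}(Y)\}$, citing Example \ref{8t} for the strong invariance of these spaces and Example \ref{12} for the compatibility of non-contractive maps with them. No further comment is needed.
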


The next immediate corollary of Theorem \ref{999} shows that the \cite[%
Corollaries 2.7, 2.8 and 2.10]{favv} and \cite[Theorem 1.3]{bdf} are all
particular cases of the following general result:

\begin{corollary}
Let $X$ and $Y$ be Banach spaces. Let $E$ be an invariant sequence space
over $X$ and $F$ be an strongly invariant sequence space over $Y.$ If $%
f\colon X\longrightarrow Y$ is compatible with $F$ and the set 
\begin{equation*}
A:=\left\{ (x_{j})_{j=1}^{\infty }\in E:(f(x_{j}))_{j=1}^{\infty }\notin
F\right\}
\end{equation*}%
is non empty, then $A$ is spaceable in $E$.
\end{corollary}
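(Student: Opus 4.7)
The plan is to derive this corollary as a direct specialization of Theorem \ref{999} by taking the index set $\Gamma$ to be a singleton. The result is essentially a relabeling, so the proof should be very short; the main thing to verify is that the hypotheses of the general theorem are matched exactly.

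First I would fix an arbitrary element $l_0$ (for concreteness one may take $\Gamma = \{l_0\}$) and define the family $(E_l)_{l \in \Gamma}$ by $E_{l_0} := F$. Because $F$ is assumed to be a strongly invariant sequence space over $Y$, the family $(E_l)_{l \in \Gamma}$ consists of strongly invariant sequence spaces over $Y$, as required by the hypothesis of Theorem \ref{999}. Likewise, the assumption that $f$ is compatible with $F$ translates directly into the hypothesis that $f$ is compatible with $E_l$ for every $l \in \Gamma$.

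Next I would unwind the definition of the set $G(E, f, (E_l)_{l \in \Gamma})$. Since $\bigcup_{l \in \Gamma} E_l = E_{l_0} = F$, we obtain
\begin{equation*}
G(E, f, (E_l)_{l \in \Gamma}) = \left\{ (x_j)_{j=1}^{\infty} \in E : (f(x_j))_{j=1}^{\infty} \notin F \right\} = A.
\end{equation*}
Theorem \ref{999} then asserts that $A$ is either empty or spaceable in $E$. By the standing hypothesis of the corollary, $A$ is non-empty, so we conclude that $A$ is spaceable.

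The argument has no real obstacle; the only care required is to notice that the framework of Theorem \ref{999} permits $\Gamma$ to be \emph{any} set (in particular a singleton), and to check that the compatibility and strong invariance hypotheses transfer verbatim. No further computation is needed.
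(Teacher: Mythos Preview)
Your proposal is correct and matches the paper's approach exactly: the paper states this result as an ``immediate corollary'' of Theorem~\ref{999} without any further proof, and your specialization to a singleton index set $\Gamma=\{l_0\}$ with $E_{l_0}=F$ is precisely the intended reduction.
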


\section{The \textquotedblleft weak\textquotedblright\ case}

In this section we prove an extension of \cite[Theorem 2.5(b)]{favv}, i.e.,
an extension of Theorem 1.4(b) to more general invariant sequence spaces.

\label{y6}Let $F$ be an invariant sequence space over $\mathbb{K}$. For any
Banach space $Y$ we define%
\begin{equation*}
F^{w}(Y):=\{(x_{j})_{j=1}^{\infty }\in Y^{\mathbb{N}}:(\varphi
(x_{j}))_{j=1}^{\infty }\in F\text{ for all }\varphi \in Y^{\prime }\}.
\end{equation*}%
It is interesting to remark that if $F$ is an invariant sequence space over $%
\mathbb{K}$, then 
\begin{equation}
\sup_{\left\Vert \varphi \right\Vert \leq 1}\left\Vert (\varphi
(x_{j}))_{j=1}^{\infty }\right\Vert _{F}<\infty   \label{u09}
\end{equation}%
for all $(x_{j})_{j=1}^{\infty }\in F^{w}(Y).$ In fact, if $%
(x_{j})_{j=1}^{\infty }\in F^{w}(Y)$ we consider%
\begin{eqnarray*}
u &:&Y^{\prime }\rightarrow F \\
\varphi  &\mapsto &(\varphi (x_{j}))_{j=1}^{\infty }.
\end{eqnarray*}%
and a general version of the Closed Graph Theorem to topological vector
spaces (see \cite[page 51]{rudin}) finish the proof that $u$ is continuous.
Indeed, if%
\begin{equation*}
\varphi _{n}\rightarrow \varphi _{0}\text{ and }u(\varphi _{n})\rightarrow
\left( z_{j}\right) _{j=1}^{\infty }\in F
\end{equation*}%
we shall show that $\left( z_{j}\right) _{j=1}^{\infty }=u\left( \varphi
_{0}\right) .$ Since $\varphi _{n}\rightarrow \varphi _{0}$ we have $\varphi
_{n}(x_{j})\rightarrow \varphi _{0}(x_{j})$ for all $j.$ On the other hand,
since $u(\varphi _{n})\rightarrow \left( z_{j}\right) _{j=1}^{\infty }$ in $F
$, i.e., $(\varphi _{n}(x_{j}))_{j=1}^{\infty }\rightarrow \left(
z_{j}\right) _{j=1}^{\infty }$ in $F$, we also have $\varphi
_{n}(x_{j})\rightarrow z_{j}$ for all $j$. Therefore 
\begin{equation*}
\varphi _{0}(x_{j})=z_{j}
\end{equation*}%
for all $j,$ and hence 
\begin{equation*}
\left( z_{j}\right) _{j=1}^{\infty }=u\left( \varphi _{0}\right) .
\end{equation*}

\bigskip The next simple lemma highlights some tools to the proof of the
main result of this section.

\begin{lemma}
\label{hj}Let $Y$ be a Banach space. If $F$ is an strongly invariant
sequence space then 
\begin{equation}
x^{0}\in F^{w}(Y)\Leftrightarrow x\in F^{w}(Y)  \label{8gg}
\end{equation}%
and%
\begin{equation}
c_{00}(Y)\subset F^{w}(Y)  \label{t5}
\end{equation}
\end{lemma}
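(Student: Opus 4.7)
The plan is to exploit the defining property of a strongly invariant sequence space, namely that membership in $F$ is preserved under passing to arbitrary subsequences, together with the invariance property (b1) from Definition~\ref{seqspa}. The key observation is that for any $\varphi\in Y'$ and any $x=(x_j)\in Y^{\mathbb N}$, the scalar sequence $(\varphi(x_j))_j$ is related in a controlled way to the scalar sequence $(\varphi(x_{j_k}))_k$ obtained from $x^{0}=(x_{j_k})_k$: the latter is literally the subsequence of the former obtained by deleting those indices where $x_j=0$.

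For the equivalence \eqref{8gg}, I would argue both directions and dispose first of the trivial case where $x^0=0$ (in which case $x$ has finitely many nonzero coordinates, so $(\varphi(x_j))_j\in c_{00}(\mathbb K)\subset F$ for every $\varphi$). Assuming $x^0\neq 0$: for the forward implication, if $x\in F^{w}(Y)$, then $(\varphi(x_j))_j\in F$ for each $\varphi$, and $(\varphi(x_{j_k}))_k$ is a subsequence of it, so the strongly invariant property forces $(\varphi(x_{j_k}))_k\in F$, i.e.\ $x^0\in F^w(Y)$. For the converse, suppose $x^0\in F^w(Y)$. I want to conclude $(\varphi(x_j))_j\in F$ for every $\varphi$; applying invariance (b1) of Definition~\ref{seqspa}, it suffices to verify that $[(\varphi(x_j))_j]^0\in F$ (the case where this $0$-operation yields the zero sequence again lands in $c_{00}(\mathbb K)\subset F$). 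Since $x_j=0$ whenever $j\notin\{j_1,j_2,\dots\}$, the nonzero scalars of $(\varphi(x_j))_j$ are exactly the nonzero scalars of $(\varphi(x_{j_k}))_k$ in the same order, so $[(\varphi(x_j))_j]^0$ is a subsequence of $(\varphi(x_{j_k}))_k\in F$, and invoking strong invariance again places it in $F$.

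For the inclusion \eqref{t5}, pick any $x\in c_{00}(Y)$ and any $\varphi\in Y'$; then $(\varphi(x_j))_j\in c_{00}(\mathbb K)$, and since strong invariance guarantees $c_{00}(\mathbb K)\subset F$, we conclude $x\in F^w(Y)$.

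The one step that requires a little care is the backward direction of \eqref{8gg}: one must remember that $[(\varphi(x_j))_j]^0$ is not the same as $(\varphi(x_{j_k}))_k$ in general, because $\varphi$ can vanish at a nonzero $x_{j_k}$; recognising it as a \emph{subsequence} of $(\varphi(x_{j_k}))_k$ is what lets the strongly invariant hypothesis finish the argument, together with invariance (b1) to lift back from the $0$-compressed version to the full sequence.
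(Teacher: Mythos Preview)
Your argument is correct. The paper's own proof is a shorter symmetric chain of equivalences based on one observation you did not use: since the indices deleted when passing from $(\varphi(x_j))_j$ to $(\varphi(x_{j_k}))_k$ are indices where $\varphi(x_j)=0$, the two sequences have \emph{identical} $0$-compressions,
\[
[(\varphi(x_j))_j]^{0} \;=\; [(\varphi(x_{j_k}))_k]^{0},
\]
so invariance~(b1) alone yields $(\varphi(x_j))_j\in F \Leftrightarrow (\varphi(x_{j_k}))_k\in F$ in one stroke, without appealing to the subsequence-closure clause of strong invariance. Your route instead leans directly on that subsequence clause in each direction (viewing $(\varphi(x_{j_k}))_k$ as a subsequence of $(\varphi(x_j))_j$, and $[(\varphi(x_j))_j]^{0}$ as a subsequence of $(\varphi(x_{j_k}))_k$). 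Both work; the paper's version is a bit slicker and shows that for the equivalence \eqref{8gg} one really only needs ordinary invariance (plus $c_{00}\subset F$ to cover the degenerate case $[(\varphi(x_j))_j]^{0}=0$, which you handled explicitly and the paper leaves tacit), while your version makes the role of the strongly invariant hypothesis more visible and treats the edge cases more carefully.
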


\begin{proof}
If $x=\left( x_{j}\right) _{j=1}^{\infty }$ and $x^{0}=\left(
x_{j_{k}}\right) _{k=1}^{\infty }$, then 
\begin{eqnarray*}
x &\in &F^{w}(Y)\Leftrightarrow \left( \varphi \left( x_{j}\right) \right)
_{j=1}^{\infty }\in F\text{ for all }\varphi \in Y^{\prime } \\
&\Leftrightarrow &\text{ }\left( \left( \varphi \left( x_{j}\right) \right)
_{j=1}^{\infty }\right) ^{0}\in F\text{ for all }\varphi \in Y^{\prime } \\
&\Leftrightarrow &\text{ }\left( \left( \varphi \left( x_{j_{k}}\right)
\right) _{k=1}^{\infty }\right) ^{0}\in F\text{ for all }\varphi \in
Y^{\prime } \\
&\Leftrightarrow &\text{ }\left( \left( \varphi \left( x_{j_{k}}\right)
\right) _{k=1}^{\infty }\right) \in F\text{ for all }\varphi \in Y^{\prime }
\\
&\Leftrightarrow &x^{0}=\left( x_{j_{k}}\right) _{k=1}^{\infty }\in F^{w}(Y).
\end{eqnarray*}%
The proof of (\ref{t5}) is a straightforward consequence of the fact that $F$
is an strongly invariant sequence space.
\end{proof}

\begin{example}
For $F=\ell _{p},c,c_{0},$ the respective $F^{w}(Y)$ are the well-known
invariant sequence spaces $\ell _{p}^{w}(Y),c^{w}(Y),c_{0}^{w}(Y).$
\end{example}

\begin{definition}
Let $X$ and $Y$ be Banach spaces, and $F$ be an invariant sequence space
over $\mathbb{K}$. A map $f:X\rightarrow Y$ such that $f(0)=0$ is strongly
compatible with $F^{w}(Y)$ if $\varphi \circ f$ is compatible with $F$ for
all continuous linear functionals $\varphi :Y\rightarrow \mathbb{K}$.
\end{definition}

\begin{example}
Any strongly non-contractive mapping (see (\ref{dd44})) $f:X\rightarrow Y$
is strongly compatible with $\ell _{q}^{w}(Y)$ and $c_{0}^{w}(Y)$.
\end{example}

\begin{definition}
Let $X$ and $Y$ be Banach spaces, $\Gamma $ be an arbitrary set and $E$ be
an invariant sequence space over $X.$ If $F_{l},$ for all $l\in \Gamma ,$
are invariant sequence spaces over $\mathbb{K}$, and $f:X\longrightarrow Y$
is any map, we define the set%
\begin{equation*}
G^{w}(E,f,(F_{l})_{l\in \Gamma })=\left\{ (x_{j})_{j=1}^{\infty }\in
E:(f(x_{j}))_{j=1}^{\infty }\notin \bigcup_{l\in \Gamma
}F_{l}^{w}(Y)\right\} .
\end{equation*}
\end{definition}

\bigskip The following theorem is a formal generalization of Theorem 1.4(b).
The proof follows the lines of the previous proofs but we present the
details for the sake of completeness:

\begin{theorem}
\label{9993333}Let $X$ and $Y$ be Banach spaces, $\Gamma $ be an arbitrary
set, $E$ be an invariant sequence space over $X$ and $F_{l}$ be strongly
invariant sequence spaces over $\mathbb{K}$ for all $l\in \Gamma $. If $%
f\colon X\longrightarrow Y$ is strongly compatible with $F_{l}^{w}(Y)$ for
all $l\in \Gamma $, then $G^{w}(E,f,(F_{l})_{l\in \Gamma })$ is either empty
or spaceable.
\end{theorem}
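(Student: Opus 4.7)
The plan is to mirror the proof of Theorem \ref{999}, replacing pointwise non-membership in an invariant sequence space over $Y$ by non-membership in the weak space $F_l^{w}(Y)$, which is witnessed by an individually chosen functional $\varphi_l \in Y'$ for each $l \in \Gamma$. Assume $G^w(E,f,(F_l)_{l\in\Gamma})$ is nonempty, fix $x=(x_j)_{j=1}^{\infty}$ in it, and choose, for each $l \in \Gamma$, a functional $\varphi_l \in Y'$ with $(\varphi_l(f(x_j)))_{j=1}^{\infty} \notin F_l$. Since $f(0)=0$ and $c_{00}(\mathbb{K}) \subset F_l$, the sequence $(x_j)_j$ must have infinitely many nonzero entries, so $x^{0}=(x_{j_k})_{k=1}^{\infty}$ is nontrivial.

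I would first check that $x^{0} \in G^w(E,f,(F_l)_{l\in\Gamma})$: because $\varphi_l \circ f$ vanishes at $0$, the zero coordinates of $(x_j)_j$ produce zero coordinates of $(\varphi_l(f(x_j)))_j$, hence $[(\varphi_l(f(x_j)))_j]^{0} = [(\varphi_l(f(x_{j_k})))_k]^{0}$, and invariance of $F_l$ yields $(\varphi_l(f(x_{j_k})))_k \notin F_l$, so $(f(x_{j_k}))_k \notin F_l^w(Y)$ for every $l$. From here the construction is identical to that of Theorem \ref{999}: split $\mathbb{N}$ into pairwise disjoint infinite subsets $\mathbb{N}_i=\{i_1<i_2<\cdots\}$, set $y_i=\sum_{k=1}^{\infty} x_{j_k} \otimes e_{i_k}$, note that $y_i^{0}=x^{0}$ so $y_i \in E$ for every $i$, and define
\[
T \colon \ell_{\tilde{s}} \longrightarrow E,\qquad T\bigl((a_i)_{i=1}^{\infty}\bigr)=\sum_{i=1}^{\infty} a_i y_i,
\]
which is well-defined, linear and injective by the same estimate used in Theorem \ref{999}.

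The heart of the argument is to show that every nonzero $z \in \overline{T(\ell_{\tilde{s}})}$ lies in $G^w(E,f,(F_l)_{l\in\Gamma})$. Writing $z=\lim_k T((a_i^{(k)})_i)$ in $E$ and invoking coordinatewise convergence (Definition \ref{seqspa}(b2)), the same chase as in Theorem \ref{999} yields an index $m$ and a scalar $\alpha_m:=\lim_k a_m^{(k)} \neq 0$ with $z_{m_p}=\alpha_m x_{j_p}$ for every $p$. Now fix $l \in \Gamma$: because $f$ is strongly compatible with $F_l^w(Y)$, the scalar map $\varphi_l \circ f$ is compatible with $F_l$ in the sense of Definition \ref{compseqspa}, so $(\varphi_l(f(x_{j_p})))_p \notin F_l$ forces $(\varphi_l(f(\alpha_m x_{j_p})))_p=(\varphi_l(f(z_{m_p})))_p \notin F_l$. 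Since $(\varphi_l(f(z_{m_p})))_p$ is a subsequence of $(\varphi_l(f(z_n)))_n$ and $F_l$ is strongly invariant, this upgrades to $(\varphi_l(f(z_n)))_n \notin F_l$, hence $(f(z_n))_n \notin F_l^w(Y)$; as $l$ was arbitrary, $z \in G^w(E,f,(F_l)_{l\in\Gamma})$.

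The main obstacle will be keeping straight the three distinct invariance assumptions in play: ordinary invariance of $F_l$ (to commute with the $(\cdot)^{0}$ operation when transferring non-membership between $(\varphi_l(f(x_j)))_j$ and $(\varphi_l(f(x_{j_k})))_k$), strong invariance of $F_l$ (to propagate non-membership from the subsequence indexed by $(m_p)_p$ to the full sequence indexed by $n$), and the functional-by-functional nature of strong compatibility. No single $\varphi$ witnesses non-membership in the whole union $\bigcup_l F_l^w(Y)$, but because we only need non-membership in each $F_l^w(Y)$ separately, fixing one $\varphi_l$ per $l$ and applying compatibility of $\varphi_l \circ f$ is enough to close the argument.
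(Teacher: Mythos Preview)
Your proposal is correct and follows essentially the same route as the paper's proof: choose a witnessing functional $\varphi_l$ for each $l$, use invariance of $F_l$ to pass to $x^{0}$, build the $y_i$ and the operator $T\colon\ell_{\tilde s}\to E$ exactly as in Theorem~\ref{999}, and then for a nonzero $z\in\overline{T(\ell_{\tilde s})}$ combine coordinatewise convergence, compatibility of $\varphi_l\circ f$ with $F_l$, and strong invariance of $F_l$ to conclude. The only cosmetic differences are that the paper explicitly records $y_i\in G^{w}(E,f,(F_l)_{l\in\Gamma})$ and invokes Lemma~\ref{hj} for $c_{00}(Y)\subset F_l^{w}(Y)$, whereas you (equivalently) use $c_{00}(\mathbb K)\subset F_l$ directly.
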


\bigskip

\begin{proof}
Assume that $G^{w}(E,f,(F_{l})_{l\in \Gamma })$ is non-void and consider $%
x=(x_{j})_{j=1}^{\infty }\in G^{w}(E,f,(F_{l})_{l\in \Gamma }).$ As in the
previous proofs, we shall begin by showing that 
\begin{equation*}
x^{0}\in G^{w}(E,f,(F_{l})_{l\in \Gamma }).\newline
\end{equation*}

Note that $x_{j}\neq 0$ for infinitely many $j$, because $f(0)=0$ and from (%
\ref{t5}) we have $c_{00}(Y)\subset F_{l}^{w}(Y).$

Denote $U=\bigcup_{l\in \Gamma }F_{l}^{w}(Y)$. We know that for each $l$
there is a $\varphi _{l}$ such that 
\begin{equation}
(\varphi _{l}\circ f(x_{j}))_{j=1}^{\infty }\notin F_{l},  \label{mna}
\end{equation}%
and thus 
\begin{equation}
\lbrack (\varphi _{l}\circ f(x_{j}))_{j=1}^{\infty }]^{0}\notin F_{l},
\label{986}
\end{equation}%
for all $l$, because $F_{l}$, for each $l$, is an invariant sequence space.
Denote $x^{0}=(x_{j_{k}})_{k=1}^{\infty },$ where $x_{j_{k}}$ is the $k$-th
non null coordinate of $x$. Now, we shall show that 
\begin{equation}
(\varphi _{l}\circ f(x_{j_{k}}))_{k=1}^{\infty }\notin F_{l}  \label{mnv}
\end{equation}%
for all $l$. Suppose that 
\begin{equation}
(\varphi _{l_{0}}\circ f(x_{j_{k}}))_{k=1}^{\infty }\in F_{l_{0}}
\label{mnb}
\end{equation}%
for some $l_{0}$. From (\ref{mnb}), since $F_{l_{0}}$ $\ $is an invariant
sequence space, we would have $[(\varphi _{l_{0}}\circ
f(x_{j_{k}}))_{k=1}^{\infty }]^{0}\in F_{l_{0}}$. But, since $\varphi
_{l_{0}}\circ f(0)=0,$ it would follow from (\ref{986}) that 
\begin{equation*}
\lbrack (\varphi _{l_{0}}\circ f(x_{j_{k}}))_{k=1}^{\infty }]^{0}=[(\varphi
_{l_{0}}\circ f(x_{j}))_{j=1}^{\infty }]^{0}\notin F_{l_{0}}.
\end{equation*}%
Since $F_{l_{0}}$ is an invariant sequence space we would have%
\begin{equation*}
(\varphi _{l_{0}}\circ f(x_{j_{k}}))_{k=1}^{\infty }\notin F_{l_{0}}
\end{equation*}%
and this contradicts (\ref{mnb}). Therefore we have (\ref{mnv}), i.e., 
\begin{equation*}
(f(x_{j_{k}}))_{k=1}^{\infty }\notin F_{l}^{w}(Y)
\end{equation*}%
for all $l$, and thus 
\begin{equation*}
x^{0}\in G^{w}(E,f,(F_{l})_{l\in \Gamma }).
\end{equation*}%
\ Again, let us separate $\mathbb{N}$ as a countable union of pairwise
disjoint subsets $(\mathbb{N}_{i})_{i=1}^{\infty }$ of $\mathbb{N}$ and as
usual, for all $i,$ we represent $\mathbb{N}_{i}=\{i_{1}<i_{2}<...\}.$
Consider%
\begin{equation*}
y_{i}=\sum_{k=1}^{\infty }x_{j_{k}}\otimes e_{i_{k}}\in X^{\mathbb{N}}.
\end{equation*}%
Since $E$ is an invariant sequence space, it follows that $y_{i}\in E$ for
all $i\in \mathbb{N}$. It is plain that $\{y_{1},y_{2},...\}$ is linearly
independent and $y_{i}^{0}=x^{0};$ we thus have $0\neq y_{i}^{0}\in E$ for
all $i$. Moreover, $y_{i}\in G^{w}(E,f,(F_{l})_{l\in \Gamma })$. In fact, if 
$y_{i}=(y_{m}^{i})_{m=1}^{\infty },$ then 
\begin{equation*}
\lbrack (f(y_{m}^{i}))_{m=1}^{\infty }]^{0}=[(f(x_{j}))_{j=1}^{\infty
}]^{0}\notin F_{l}^{w}(Y)
\end{equation*}%
for each $l$. Therefore, from (\ref{8gg}) of Lemma \ref{hj}, we have 
\begin{equation*}
(f(y_{m}^{i}))_{m=1}^{\infty }\notin F_{l}^{w}(Y)
\end{equation*}%
for each $i\in \mathbb{N}$ and $l\in \Gamma $ and thus $y_{i}\in
G^{w}(E,f,(F_{l})_{l\in \Gamma })$. Let $\tilde{s}=1$ if $E$ is a Banach
space and $\tilde{s}=s$ if $E$ is an $s$-Banach space, $0<s<1$. Proceeding
as in the proof of Theorem \ref{999} we know that the operator%
\begin{equation*}
T\colon \ell _{\tilde{s}}\longrightarrow E~~,~~T\left( \left( a_{i}\right)
_{i=1}^{\infty }\right) =\sum\limits_{i=1}^{\infty }a_{i}y_{i},
\end{equation*}%
is well-defined and injective. It remains to show that $\overline{T\left(
\ell _{\tilde{s}}\right) }$ belongs to $G^{w}(E,f,(F_{l})_{l\in \Gamma })$.
We shall show that if $z=(z_{n})_{n=1}^{\infty }\in $ $\overline{T\left(
\ell _{\tilde{s}}\right) }$ is a non null sequence then $\left(
f(z_{n}\right) )_{j=1}^{\infty }\notin \bigcup_{l\in \Gamma }F_{l}^{w}(Y).$
Let $\left( a_{i}^{(k)}\right) _{i=1}^{\infty }\in \ell _{\tilde{s}}$, $k\in 
\mathbb{N}$, be such that $z=\lim_{k\rightarrow \infty }T\left( \left(
a_{i}^{(k)}\right) _{i=1}^{\infty }\right) $ in $E.$ Note that, for each $%
k\in \mathbb{N}$,%
\begin{equation*}
T\left( \left( a_{i}^{(k)}\right) _{i=1}^{\infty }\right)
=\sum\limits_{i=1}^{\infty }a_{i}^{(k)}y_{i}=\sum\limits_{i=1}^{\infty
}a_{i}^{(k)}\cdot \sum\limits_{p=1}^{\infty }x_{j_{p}}\otimes
e_{i_{p}}=\sum\limits_{i=1}^{\infty }\sum\limits_{p=1}^{\infty
}a_{i}^{(k)}x_{j_{p}}\otimes e_{i_{p}}.
\end{equation*}%
Fix $r\in \mathbb{N}$ such that $z_{r}\neq 0.$ Since $\mathbb{N}%
=\bigcup\limits_{j=1}^{\infty }\mathbb{N}_{j}$, there are (unique) $m,t\in 
\mathbb{N}$ such that $e_{m_{t}}=e_{r}$. Thus, for each $k\in \mathbb{N}$,
the $r$-th coordinate of $T\left( \left( a_{i}^{(k)}\right) _{i=1}^{\infty
}\right) $ is the vector $a_{m}^{(k)}x_{j_{t}}.$ From the Definition \ref%
{seqspa}(b2) we know that convergence in $E$ implies coordinatewise
convergence, and thus 
\begin{equation*}
z_{r}=\lim_{k\rightarrow \infty }a_{m}^{(k)}x_{j_{t}}=\left(
\lim_{k\rightarrow \infty }a_{m}^{(k)}\right) x_{j_{t}}.
\end{equation*}%
It follows that $\alpha _{m}:=\lim\limits_{k\rightarrow \infty
}a_{m}^{(k)}\neq 0$ and 
\begin{equation*}
\alpha _{m}x_{j_{p}}=\left( \lim_{k\rightarrow \infty }a_{m}^{(k)}\right)
x_{j_{p}}=\lim_{k\rightarrow \infty }a_{m}^{(k)}x_{j_{p}}
\end{equation*}%
for every $p\in \mathbb{N}$. Besides, for $p,k\in \mathbb{N}$, the $m_{p}$%
-th coordinate of $T\left( \left( a_{i}^{(k)}\right) _{i=1}^{\infty }\right) 
$ is $a_{m}^{(k)}x_{j_{p}}.$ So, coordinatewise convergence gives $%
\lim\limits_{k\rightarrow \infty }a_{m}^{(k)}x_{j_{p}}=z_{m_{p}}$ and hence%
\begin{equation*}
z_{m_{p}}=\alpha _{m}x_{j_{p}}
\end{equation*}%
for every $p\in \mathbb{N}$. \ Therefore 
\begin{equation}
\left( \varphi _{l}\circ f(z_{m_{p}})\right) _{p=1}^{\infty }=\left( \varphi
_{l}\circ f\left( a_{m}x_{j_{p}}\right) \right) _{p=1}^{\infty }
\label{aaaa}
\end{equation}%
Since $f$ is strongly compatible with $F_{l}^{w}(Y)$ for all $l$, we
conclude that $\varphi _{l}\circ f$ is compatible with $F_{l}$ and thus,
from (\ref{mnv}), it follows that%
\begin{equation}
\left( \varphi _{l}\circ f\left( a_{m}x_{j_{p}}\right) \right)
_{p=1}^{\infty }\notin F_{l}.  \label{hyu}
\end{equation}%
Thus, from (\ref{aaaa}) and (\ref{hyu}) we have 
\begin{equation}
\left( \varphi _{l}\circ f(z_{m_{p}})\right) _{p=1}^{\infty }\notin F_{l},
\label{hhh}
\end{equation}%
for all $l$. Therefore since $\left( \varphi _{l}\circ f(z_{m_{p}})\right)
_{p=1}^{\infty }$ is a subsequence of $\left( \varphi _{l}\circ
f(z_{n})\right) _{n=1}^{\infty }$ and $F_{l}$ is a strongly invariant
sequence space, it follows that 
\begin{equation*}
\left( \varphi _{l}\circ f(z_{j}\right) )_{j=1}^{\infty }\notin F_{l},
\end{equation*}%
for all $l\in \Gamma ,$ and we finally conclude that $z\in
G^{w}(E,f,(F_{l})_{l\in \Gamma })$.
\end{proof}

\bigskip Let $X$ and $Y$ be Banach spaces, $E$ be an invariant sequence
space over $X$ and $F_{l}=\ell _{l}$, with $l\in \Gamma \subset (0,\infty ].$
If $f\colon X\longrightarrow Y$ is strongly non-contractive then $f$ is
strongly compatible with $\ell _{l}^{w}(Y)$ and from the previous theorem we
conclude that $G^{w}(E,f,(F_{l})_{l\in \Gamma })$ is either empty or
spaceable. Since 
\begin{equation*}
G^{w}(E,f,(F_{l})_{l\in \Gamma })=\left\{ (x_{j})_{j=1}^{\infty }\in
E:(f(x_{j}))_{j=1}^{\infty }\notin \textstyle\bigcup\limits_{l\in \Gamma
}\ell _{l}^{w}(Y)\right\}
\end{equation*}%
we recover Theorem 1.4(b).

\begin{remark}
It is interesting to note that in all the results presented in this note
(and in the respective versions from \cite{barr, favv}) the
lineability/spaceability results satisfy a somewhat slightly stronger
condition, in the following sense: given any point $x$\ of the set $%
G(E,f,(E_{l})_{l\in \Gamma })$ it is proved here that there is a closed
infinite dimensional vector space $V$ such that \textquotedblleft
essentially\textquotedblright\ $x\in V\subset G(E,f,(E_{l})_{l\in \Gamma
})\cup \{0\}$. This leads to the following extension of the notion of
lineability that may be interesting to investigate in different contexts: a
subset $A$ of a vector space $W$ is \textit{\ pointwise $\lambda $-lineable}
if for any $x\in A$ there is a $\lambda $-dimensional vector space $V$ such
that $x\in V\subset A\cup \{0\}\subset W.$ The same definition can be
adapted to the notion of spaceability. It is not difficult to verify that in
general these concepts are strictly stronger than just
lineability/spaceability. For instance, let $W=\ell _{2}$ and 
\begin{equation*}
A=\left( \mathrm{span}\{e_{1}\}\right) \cup \left( \mathrm{span}%
\{e_{2},e_{3}\}\right) \cup \left( \mathrm{span}\{e_{4},...,e_{6}\}\right)
\cup ...
\end{equation*}%
It is plain that $A$ is $n$-lineable for all positive integer $n$, but $A$
is not pointwise $2$-lineable.
\end{remark}

\textbf{Acknowledgment.} The authors thank the referee for important suggestions and corrections

\bigskip

\end{document}